\def\today{\ifcase\month\or
	January\or February\or March\or April\or May\or June\or
	July\or August\or September\or October\or November\or December\fi
	\space\number\day, \number\year}
\DeclareMathOperator{\supp}{\mathrm{supp}}
\newtheorem{theorem}{Theorem}
\newtheorem{lemma}[theorem]{Lemma}
\newtheorem{proposition}[theorem]{Proposition}
\newtheorem{corollary}[theorem]{Corollary}
\theoremstyle{definition}
\theoremstyle{remark}
\newtheorem{remark}[theorem]{Remark}
\newcommand{\C}{\mathbb{C}}
\newcommand{\R}{\mathbb{R}}
\newcommand{\h}{\frac12}
\newcommand{\hh}{\tfrac12}
\newcommand{\ds}{\text{\rm d}s}
\newcommand{\dt}{\text{\rm d}t}
\renewcommand{\d}{\text{\rm d}}
\newcommand{\dx}{\text{\rm d}x}
\newcommand{\dy}{\text{\rm d}y}
\newcommand{\ov}{\overline}
\newcommand{\im}{{\rm Im}\,}
\newcommand{\re}{{\rm Re}\,}
\begin{document}
	\title{An extremal problem and inequalities for entire functions of exponential type}
 	\thanks{Research supported by the Brazilian Science Foundations FAPESP under Grants 2016/09906-0, 2019/12413-4 and  2021/13340-0 and CNPq under Grant 309955/2021-1, the 
 Bulgarian National Research Fund through Contract KP-06-N62/4, and  the Austrian Science Fund (FWF) via Project P-35322. MS is supported by the grant \textit{Juan de la Cierva incorporaci\'on} 2019 IJC2019-039753-I, the Basque Government through the BERC 2022-2025 program, by the Spanish State Research Agency project PID2020-113156GB-100/AEI/10,13039/501100011033 and through BCAM Severo Ochoa excellence accreditation SEV-2023-2026.}
	\subjclass[2010]{42A38, 30D15, 41A17}
	\keywords{One-delta problem, extremal problem, extremal function, entire function of exponential type}
 	\author{Andr\'es Chirre}
 	\address{Departamento de Ciencias - Secci\'on Matem\'aticas, Pontificia Universidad Cat\'olica del Per\'u, Lima, Per\'u}
 \email{cchirre@pucp.edu.pe}
 	\author[D. K. Dimitrov]{Dimitar K. Dimitrov}
 	\address{Departamento de Matem\'atica, IBILCE, Universidade Estadual Paulista UNESP, S\~{a}o Jos\'e do Rio Preto 15054, Brazil}
 	\email{d\underline{\space}k\underline{\space}dimitrov@yahoo.com}
 	\author[E. QUESADA-HERRERA]{EMILY QUESADA-HERRERA}
 	\address{Graz University of Technology, Institute of Analysis and Number Theory, Kopernikusgasse 24/II, 8010 Graz, Austria}
 	\email{quesada@math.tugraz.at}
 	\author{Mateus Sousa}
 	\address{BCAM - Basque Center for Applied Mathematics, Alameda de Mazarredo 14, 48009 Bilbao, Bizkaia, Spain}
 \email{mcosta@bcamath.org}

	\allowdisplaybreaks
	\numberwithin{equation}{section}

	\begin{abstract}
		We study two variations of the classical one-delta problem for entire functions of exponential type, known also as the Carath\'eodory--Fej\'er--Tur\'an problem. The first variation imposes the additional 
		requirement that the function is radially decreasing while the second one is a generalization which involves derivatives of the entire function. Various interesting inequalities, inspired by results due to Duffin and Schaeffer, Landau, and Hardy and Littlewood, are also established.
	\end{abstract}
	
	\maketitle

\section{Introduction}

	In the present note we study some extremal problems concerning certain quantities over specific families of entire functions of exponential type. For $\Delta>0$, we say that an entire function $G:\C \to \C$ has exponential type at most $2\pi\Delta$ if, for all $\varepsilon>0$, there exists a positive constant $C_\varepsilon$ such that $$|G(z)|\leq C_\varepsilon \,e^{(2\pi\Delta+\varepsilon)|z|}, \,\,\,\,\,\, \mbox{for all} \,\,\,\, z\in\C.$$ 
We adopt the usual convention that an entire function $f:\C\to\C$ is said to be real if its restriction to $\R$ is real-valued, as well as, that the function $g^*(z)$ is defined by $g^*(z)=\overline{g(\overline{z})}$. For $f,g \in L^1(\R)$ we denote by $f \ast g$ their convolution, which is defined by $(f \ast g)(x)=\displaystyle\int_{-\infty}^{\infty}f(y)g(x-y)\,\dy$.

\subsection{The one-delta problem} The classical \emph{one-delta problem} is to determine the infimum
$$\mathcal{A} = \inf_{\substack{g\in \mathcal{G}}}{\int_{-\infty}^{\infty} g(x)\,\dx},$$
where the class $\mathcal{G}$ consists of real entire functions $g:\C\to\C$ of exponential type at most $2\pi$ which are majorants of a \emph{one-delta function} at the origin over the real line, i.e, $g(x)\geq 0$ for all $x\in\R$ and $g(0)\geq 1$. By scaling, this is equivalent 
$$
\mathcal{A}=\inf_{\substack{f\in \mathcal{F}\\f(0)\neq  0}}\dfrac{\|f\|_1}{f(0)},
$$
where the family $\mathcal{F}$ consists of real entire functions $f:\C\to\C$ of exponential type at most $2\pi$ such that $f\in L^{1}(\R)$, and $f(x)\geq 0$ for all $x\in\R$.  This is a classical problem, and several of its variations are named after Carath\'eodory, Fej\'er and Tur\'an. We refer to \cite{Gor2001, Gor2018, Sz2015, Sz2011} for comprehensive information about its history and for some recent contributions. It is known that $\mathcal{A}=1$, and the unique extremal solution of the one-delta problem is the Fej\'er kernel, given by
	\begin{equation} \label{4_8_1:38pm}
		K(z)=\Big(\dfrac{\sin \pi z}{\pi z}\Big)^2.
	\end{equation} 
To obtain an equivalent formulation of this problem, we may consider a decomposition
result due to Krein  \cite[p. 154]{Krein}. It states that if $f:\C\to\C$ is an entire function of exponential type at most $2\pi$ such that $f\in L^1(\R)$ and $f(x)\geq 0$ for all $x\in\R$, then there exists an entire function $g:\C\to\C$ in the Paley--Wiener space $PW^2$ such that $f(z)=g(z)g^*(z)$. Here, $PW^2$ is the subspace of $L^2(\R)$ consisting of entire functions of exponential type at most $\pi$. Therefore, the one-delta problem can also be stated as finding
\begin{align}  \label{17_6pm}
\mathcal{A}=\inf_{\substack{g\in PW^{2}\\g(0)\neq 0}}\dfrac{\|g\|^2_2}{|g(0)|^2}.
\end{align} 
Other $L^p-$variations of this problem have also been studied in \cite{CMS, BCCS, Korevaar}.  Note that \eqref{17_6pm} can be stated in yet another alternative way as follows: the inequality 
\begin{align} \label{9_59pm}
1\leq \int_{-\infty}^\infty|g(x)|^2\,\dx,
\end{align} 
holds for every $g\in PW^2$ such that $g(0)=1$, and \eqref{9_59pm} reduces to an equality if and only if 
$$
g(z)=\dfrac{\sin \pi z}{\pi z}.
$$ 
Our main goal is to study some natural variations of each of the above versions of the one-delta problem.

\subsection{Monotone-delta problem}	
The \emph{monotone-delta problem} is to find 
\begin{align}\label{6_53pm}
\mathcal{A}_1=\inf_{\substack{f\in \mathcal{F}_1\\f(0)\neq  0}}\dfrac{\|f\|_1}{f(0)},
\end{align}
where the family $\mathcal{F}_1$ consists of real entire functions $f:\C\to\C$ of exponential type at most $2\pi$, such that $f\in L^{1}(\R)$, $f(x)\geq 0$ for all $x\in\R$, and $f$ is \emph{radially decreasing}, that is, $f$ is increasing on $(-\infty,0)$ and decreasing on $(0,\infty)$. To the best of our knowledge, this problem was posed explicitly by Jeffrey Vaaler. It is a variant, with additional constraints, of another problem, solved by Holt and Vaaler in \cite{HV}.  Another minimization problem with monotonicity restrictions was considered in \cite{CL2023} by Carneiro and Littmann, in the setting of one-sided majorants for the signum function.

In the following theorem, we present some qualitative and quantitative information about this problem. 	
\begin{theorem} \label{5_06pm} The following statements about the monotone-delta problem hold:
		\begin{itemize}
			\item [(a)] There exists an even function $F\in \mathcal{F}_1$ with $F(0)=1$ that extremizes \eqref{6_53pm}.
			\item [(b)] All the zeros of any even extremizer $F$ lie in the set 
			$
			S=\{z\in\C: |\re{z}|>|\im{z}|>0\}.
			$
			\item [(c)] The constant $\mathcal{A}_1$ satisfies
			$
			1.2750<\mathcal{A}_1<1.27714.
			$
		\end{itemize}
	\end{theorem}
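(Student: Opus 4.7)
My plan is a standard compactness/normal-families argument. Take a minimizing sequence $\{f_n\} \subset \mathcal{F}_1$ with $f_n(0) = 1$ and $\|f_n\|_1 \to \mathcal{A}_1$. Radial monotonicity together with $f_n(0)=1$ gives $|f_n(x)| \le 1$ on $\R$; combined with exponential type at most $2\pi$, this yields $|f_n(z)| \le e^{2\pi |\im z|}$ on $\C$. Montel's theorem produces a locally uniform limit $F$, entire of type $\leq 2\pi$, with $F \geq 0$ and $F$ radially decreasing (both preserved under pointwise limits), $F(0) = 1$, and $\|F\|_1 \leq \mathcal{A}_1$ by Fatou, so $F$ is an extremizer. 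A direct check (using $F$ increasing on $(-\infty,0)$ and decreasing on $(0,\infty)$) shows that $F(-\cdot)$ is again radially decreasing; hence the symmetrization $(F(z) + F(-z))/2$ is an even element of $\mathcal{F}_1$ with the same value at the origin and no larger $L^1$ norm, which is the sought even extremizer.

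\textbf{Part (b) -- Zero localization.} Let $F$ be an even extremizer. The origin is excluded by $F(0)=1$. A zero at $x_0 > 0$, together with $F \geq 0$ and $F$ decreasing on $(0,\infty)$, forces $F \equiv 0$ on $[x_0, \infty)$, contradicting analyticity; hence $F$ has no real zeros. To rule out zeros on the imaginary axis, on the diagonals $|\re z| = |\im z|$, or in the vertical cone $|\im z| > |\re z|$, I would apply a perturbation argument via Krein's factorization $F = g g^*$ with $g \in PW^2$. Under the real-evenness symmetry, non-real zeros group either as imaginary-axis pairs $\{\pm iy\}$ or as complex quadruples $\{\pm z_0, \pm \bar z_0\}$, and by combining an appropriate pair or working with a quadruple directly one exposes a quartic factor $(z^2 - a)(z^2 - \bar a) = z^4 - 2(\re a) z^2 + |a|^2$ in $F$. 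Zeros outside $S$ correspond precisely to $\re a \leq 0$. Keeping $|a|$ fixed (so as to preserve $F(0)$) and passing to $\widetilde a$ with $\re \widetilde a > \re a$, I compute
\est{
\|\widetilde F\|_1 - \|F\|_1 = -2 (\re \widetilde a - \re a) \int_{\R} x^2 H(x)\, \dx < 0,
}
where $H \geq 0$ is the remaining real-line factor of $F$, contradicting extremality. The main obstacle is to verify that this perturbation can be performed while preserving the radial monotonicity of $F$; I expect this to follow from a $C^1$-continuity argument on compact subsets of $\R$, combined with suitable tail estimates and careful handling of any points where $F'$ vanishes.

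\textbf{Part (c) -- Numerical bounds.} For $\mathcal{A}_1 > 1$ I argue by contradiction: if $\mathcal{A}_1 = 1$, the extremizer from (a) satisfies $\|F\|_1 = F(0) = 1$ and, by uniqueness in the classical one-delta problem, must coincide with the Fej\'er kernel $K$ of \eqref{4_8_1:38pm}; but $K$ vanishes at every nonzero integer with strictly positive local maxima in between, so $K$ is not radially decreasing, giving $K \notin \mathcal{F}_1$ and the desired contradiction. For the upper bound, I would test $\|f\|_1/f(0)$ against an explicit candidate in $\mathcal{F}_1$ -- a natural choice being a damped Fej\'er-type function $f(z) = c\, K(z)/(1 + \alpha z^2)$ or a similar one- or two-parameter family of exponential type $2\pi$ -- verify radial monotonicity by a log-derivative sign analysis on $\R$, and then optimize the parameters to reach the claimed numerical value $1.2771\ldots$.
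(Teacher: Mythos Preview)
Part (a) is correct; your normal-families argument via Montel's theorem is a valid alternative to the paper's $L^2$ weak-compactness route, and both yield the same conclusion.

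Part (b) has a genuine gap. You correctly identify that the quartic factor $Q(z)=z^4-2(\re a)z^2+|a|^2$ with $a=z_0^2$ satisfies $\re a\le 0$ precisely when $|\im z_0|\ge|\re z_0|$, and that replacing $\re a$ by a larger value while fixing $|a|$ keeps $F(0)$ and strictly decreases $\|F\|_1$. The problem is exactly the one you flag: there is no reason a perturbation of the zeros preserves the radial monotonicity of $F$ on $\R$, and the hoped-for $C^1$-continuity argument does not rescue this, since $F'$ may well vanish at isolated points of $(0,\infty)$ (indeed, by Lemma~\ref{7_25pm} one has $F'(x)=-x|h(x)|^2$, which vanishes wherever $h$ does). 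The paper avoids this difficulty by a cleaner move: instead of perturbing the zero, it \emph{removes} it, replacing $F$ by $H(z)=(a^2+b^2)^2F(z)/Q(z)$ (or by $b^2F(z)/(z^2+b^2)$ for a purely imaginary zero $ib$). The key observation is that when $|b|\ge|a|$ the restriction of $(a^2+b^2)^2/Q$ to $\R$ is itself nonnegative, radially decreasing, and bounded by $1$; hence $H$, being a product of two nonnegative radially decreasing functions, is again radially decreasing, with $H(0)=F(0)$ and $\|H\|_1<\|F\|_1$. No perturbative or continuity argument is needed.

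Part (c), lower bound, is fine and matches the paper. For the upper bound, however, your proposed test function $f(z)=c\,K(z)/(1+\alpha z^2)$ is not entire: for $\alpha>0$ it has poles at $z=\pm i/\sqrt{\alpha}$, which are not zeros of the Fej\'er kernel $K$. So this candidate is not in $\mathcal{F}_1$ at all, and the vague ``similar family'' does not constitute a proof of the specific value $1.2771\ldots$. The paper obtains the upper bound by a genuinely different route: Lemma~\ref{7_25pm} reformulates the problem as minimizing the ratio $2\int|x|^2|h|^2\,\dx\big/\int|x||h|^2\,\dx$ over entire $h$ of type at most $\pi$ with $xh\in L^2(\R)$, and then an explicit polynomial ansatz $\widehat{h_0}(t)=(\tfrac14-t^2)(1-\tfrac95 t^2)\chi_{I}(t)$ yields the exact rational value $49484/38745=1.27717\ldots$. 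Your proposal does not reach this construction.
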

	
Part (a) follows from standard compactness arguments. For part (b) we will show that zeros outside of $\mathcal{S}$ will either force a function to be zero, by analytic continuation and the constraints in the class $\mathcal{F}_1$, or one can carefully remove said zero and arrive at a contradiction.  Part (c) of Theorem \ref{5_06pm} is constructive, and although our lower bound only coincides up the two first digits, we conjecture that the upper bound in part (c) is sharp, at least up the first four significant digits of $\mathcal{A}_1$ shown above. As evidence, we exhibit  concrete examples for which the value 1.2771\ldots is attained. To estimate $\mathcal{A}_1$, we first reformulate the monotone-delta problem (see Lemma \ref{7_25pm} below) to the one of determining the infimum 
 \begin{equation}\label{eq:A12}
    \mathcal{A}_1=	\inf_{\substack{h\in \mathcal{F}_2\\h\not\equiv0}}\dfrac{2\displaystyle\int_{-\infty}^{\infty}|x|^2|h(x)|^2\,\dx}{\displaystyle\int_{-\infty}^
		\infty|x||h(x)|^2\,\dx},
\end{equation}
where the family $\mathcal{F}_2$ consists of entire functions $h:\C\to\C$ of exponential type at most $\pi$ such that $xh\in L^{2}(\R)$ and $|h(x)|=|h(-x)|$. We then introduce an $L^{2}$ approach to generate upper and lower bounds that  converge to $\mathcal{A}_1$ (see Theorem \ref{thm:lower_bounds}), and we use this approach to computationally obtain high precision numerical bounds, with rigorous computations in Ball arithmetic -- see Section \ref{sec:lower_bounds}.
Furthermore, we also find an explicit, relatively simple example of a function $h_0\in \mathcal{F}_2$ (see \eqref{eq:hPold2}). For this $h_0$, we compute explicitly the quotient in (\ref{eq:A12}), which turns out to be $1.2771\ldots$. Despite that $h_0$ is not the extremal function for (\ref{eq:A12}), our conjecture is that the value $1.2771\ldots$ is so close to the infimum $\mathcal{A}_1$, that they differ only in the decimal digits after the fourth one. Section \ref{sec:lower_bounds} is dedicated to prove part (c). See also \cite{GorM, HB} for works involving similar problems with computational approaches to solutions.

\smallskip

The monotone-delta problem has also been considered in $\mathbb{R}^d$, for $d\geq 2$. In \cite{Carneiroteam}, using techniques from the theory of de Branges spaces, the authors found the exact solution of the monotone-delta problem when $d$ is even. Nonetheless, the authors state that the case when $d$ is odd seems more subtle and remains open. 

\smallskip

Despite that Lemma \ref{7_25pm} below provides an integral representation of any function in $\mathcal{F}_1$, the first interesting explicit example of a function in this class we constructed was based on the classical method of  Sonin, which was itself invented with the intention to obtain information about the monotonicity of the successive relative minima and maxima of certain oscillatory solutions of ordinary differential equations (see \cite[Section 7.31]{G}). If $g:\C \to\C$ is a real entire function in $PW^2$ and satisfies a second-order differential equation of the form $y''+(B/x)\,y'+Cy=0$, with constants $B,C>0$, Sonin's method suggests to construct the function
	\begin{align}  \label{11_38pm}
		f(z)=(g(z))^2+\dfrac{(g'(z))^2}{C}.
	\end{align}
By the Plancherel-P\'olya theorem, since $g\in PW^2$ we have that $g'\in PW^2$, and therefore $f\in\mathcal{F}_1$. Moreover $f(x)$ is a ``lid" of $g^2(x)$ in the sense that $f(x)\geq g^2(x)$ for every $x\in \mathbb{R}$ and $f$ interpolates $g^2$ and possesses inflection points at its local maxima.  
Figure \ref{fejerCover} shows 
Fej\'er's kernel $K(x)$ and its lid $f(x)$.
	\begin{figure}[ht] 
		\centering
	\includegraphics[width=8cm, height=4cm]{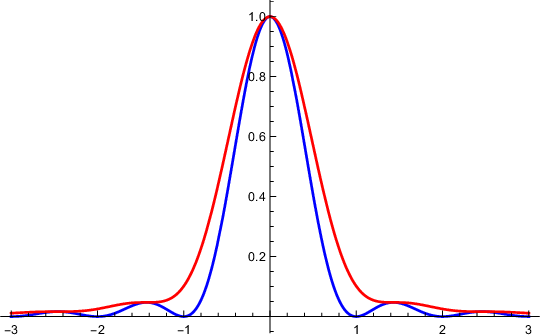}
	\caption{The {\color{blue}{Fej\'er kernel} $K(x)$} defined in \eqref{4_8_1:38pm} and its {\color{red}{lid $f(x)$}}.}
 \label{fejerCover}
	\end{figure}

\subsection{The one-delta problem with derivatives} The function in (\ref{11_38pm}) appears in a classical inequality for entire functions. Duffin and Schaeffer  \cite[p. 239]{DSB} proved that if a real entire function $g:\C\to\C$ of exponential type at most $\pi$ is such that $|g(x)|\leq 1$ for all $x\in\R$, then
		\begin{equation*} 
		|g(x)|^2+\dfrac{|g'(x)|^2}{\pi^2}\leq 1,\,\,\,\,\,\, \mbox{for all} \,\,x\in\R.
	\end{equation*} 
Inspired by this inequality, we prove that specific sums of the $L^2-$norms of a function $g \in PW^2$, normalized by $g(0)=1$, and its consecutive derivatives, are bounded from below. Our result may be considered a variation of the one-delta problem where one wishes to minimize sums of $L^2-$norms of an entire function and of its derivatives, and reads as follows:

	\begin{theorem} \label{2_57pm} 
	Let $N$ be a nonnegative integer and the real polynomial
	$$
	\mathcal{P}(x) = \displaystyle\sum_{n=0}^{N}a_n x^{n}
	$$ 
	be positive for every $x \in [0,1]$. 
	Then the inequality
	\begin{align} \label{10_02pm} \left(\int_{0}^{1}\dfrac{1}{\mathcal{P}(t^2)}\dt\right)^{-1}\leq 	\int_{-\infty}^{\infty}\displaystyle\sum_{n=0}^N\dfrac{a_n}{\pi^{2n}}\big|g^{(n)}(x)\big|^2\,\dx
	\end{align} 
	holds for every $g\in PW^2$ which obeys the normalization $g(0)=1$. Moreover, equality in \eqref{10_02pm} is attained if and only if 
	\begin{align} \label{10_44pm}
		g(z)=\left(\int_{0}^{1}\dfrac{1}{\mathcal{P}(t^2)}\dt\right)^{-1}\int_{0}^{1}\dfrac{\cos{(\pi zt)}}{\mathcal{P}(t^2)}\,\dt.
	\end{align} 
\end{theorem}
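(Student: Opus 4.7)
The plan is to reformulate the inequality on the Fourier side and reduce it to a weighted Cauchy--Schwarz estimate. Since $g\in PW^2$ has exponential type at most $\pi$, the Paley--Wiener theorem provides $\widehat{g}\in L^2(\R)$ supported in $[-\tfrac12,\tfrac12]$, and differentiation under the integral gives $g^{(n)}(z)=\int_{-1/2}^{1/2}(2\pi i\xi)^n\widehat{g}(\xi)\,e^{2\pi iz\xi}\,\ddxi$ for every $n\geq 0$. Plancherel's identity then rewrites the right-hand side of \eqref{10_02pm} as
\begin{equation*}
\sum_{n=0}^{N}\frac{a_n}{\pi^{2n}}\int_{-\infty}^{\infty}\big|g^{(n)}(x)\big|^2\,\dx=\int_{-1/2}^{1/2}\mathcal{P}(4\xi^2)\,|\widehat{g}(\xi)|^2\,\ddxi,
\end{equation*}
upon pulling the sum inside the integral, while the normalization becomes $g(0)=\int_{-1/2}^{1/2}\widehat{g}(\xi)\,\ddxi=1$.

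The hypothesis that $\mathcal{P}$ is positive on $[0,1]$ ensures that the weight $\mathcal{P}(4\xi^2)$ is bounded and bounded away from zero on $[-\tfrac12,\tfrac12]$, so Cauchy--Schwarz applied to the factorization $\widehat{g}(\xi)=\mathcal{P}(4\xi^2)^{-1/2}\cdot\bigl(\mathcal{P}(4\xi^2)^{1/2}\widehat{g}(\xi)\bigr)$ yields
\begin{equation*}
1=\biggl|\int_{-1/2}^{1/2}\widehat{g}(\xi)\,\ddxi\biggr|^2\leq\biggl(\int_{-1/2}^{1/2}\frac{\ddxi}{\mathcal{P}(4\xi^2)}\biggr)\biggl(\int_{-1/2}^{1/2}\mathcal{P}(4\xi^2)|\widehat{g}(\xi)|^2\,\ddxi\biggr).
\end{equation*}
The substitution $t=2\xi$ evaluates the first factor as $\int_0^1 \mathcal{P}(t^2)^{-1}\,\dt$, and rearranging together with the Plancherel identity above produces exactly \eqref{10_02pm}.

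For the equality clause, the equality case of Cauchy--Schwarz forces $\widehat{g}(\xi)=c/\mathcal{P}(4\xi^2)$ a.e.\ on $[-\tfrac12,\tfrac12]$ for some constant $c$, and the normalization $g(0)=1$ pins down $c=\bigl(\int_0^1\mathcal{P}(t^2)^{-1}\,\dt\bigr)^{-1}$. Inverting the Fourier transform and exploiting the evenness of the weight (the sine part vanishes by odd symmetry, then $t=2\xi$ folds the interval to $[0,1]$) recovers precisely the formula \eqref{10_44pm}, and one verifies directly that this $g$ is entire of exponential type at most $\pi$ and lies in $PW^2$. I do not anticipate a genuine obstacle: the positivity of $\mathcal{P}$ on $[0,1]$ both legalizes the Cauchy--Schwarz step and guarantees that the candidate extremizer is well defined.
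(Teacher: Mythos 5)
Your proposal is correct and follows essentially the same route as the paper: pass to the Fourier side via Paley--Wiener and Plancherel to rewrite the sum of weighted $L^2$ norms of derivatives as $\int_{-1/2}^{1/2}\mathcal{P}(4\xi^2)|\widehat{g}(\xi)|^2\,\mathrm{d}\xi$, then apply Cauchy--Schwarz with the factorization $\widehat{g}=\mathcal{P}(4\xi^2)^{-1/2}\cdot\mathcal{P}(4\xi^2)^{1/2}\widehat{g}$ against the normalization $\int\widehat{g}=g(0)=1$, and read off the extremizer from the equality case. The paper carries out the same steps, including the identification of the constant via $g(0)=1$.
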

Note that when $N=0$ and $a_0=1$, we recover the inequality \eqref{9_59pm}, which once again shows that the latter is a natural result in the spirit of the one-delta problem. Moreover, choosing the polynomial $\mathcal{P}(x)=1+a\pi^2x$, we obtain the following corollary.

\begin{corollary}
	Fix $a>0$. Then the inequality
	\begin{align}  \label{10_57pm}
		\dfrac{\pi\sqrt{a}}{\arctan(\pi\sqrt{a})} \leq \int_{-\infty}^{\infty}\left(|g(x)|^2 +a\left|g^{\prime}(x)\right|^2\right)\,\dx,
	\end{align}
	holds for every $g\in PW^2$ with $g(0)=1$ and the unique extremal function for which \eqref{10_57pm} reduces to an equality is 
	$$
	g(z)=\dfrac{\pi\sqrt{a}}{\arctan(\pi\sqrt{a})}\int_{0}^{1}\dfrac{\cos(\pi zt)}{1+a\pi^2t^2}\,\dt.
	$$ 
\end{corollary}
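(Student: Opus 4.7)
The plan is to derive this corollary as a direct specialization of Theorem~\ref{2_57pm}: I take $N=1$, $a_0=1$, and $a_1=a\pi^2$, which yields the polynomial $\mathcal{P}(x)=1+a\pi^2 x$. Since $a>0$, this $\mathcal{P}$ is strictly positive on all of $[0,\infty)$, hence on $[0,1]$ in particular, so the hypothesis of Theorem~\ref{2_57pm} is met and \eqref{10_02pm} applies with this choice.

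On the right-hand side of \eqref{10_02pm}, the sum collapses via $a_1/\pi^2=a$ to give exactly $|g(x)|^2+a|g'(x)|^2$, matching the integrand in \eqref{10_57pm}. For the constant on the left-hand side, I would evaluate $\int_0^1 (1+a\pi^2 t^2)^{-1}\dt$ by the substitution $u=\pi\sqrt{a}\,t$, which reduces the integral to a textbook arctangent and returns $\arctan(\pi\sqrt{a})/(\pi\sqrt{a})$. Taking the reciprocal produces the constant $\pi\sqrt{a}/\arctan(\pi\sqrt{a})$ asserted in \eqref{10_57pm}.

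Finally, substituting $\mathcal{P}(t^2)=1+a\pi^2 t^2$ into \eqref{10_44pm} reproduces verbatim the extremal function stated in the corollary, and its uniqueness follows directly from the corresponding clause in Theorem~\ref{2_57pm}. There is no genuine obstacle: the derivation is a one-line specialization of the theorem, and the only calculation required is the elementary arctangent integral just indicated.
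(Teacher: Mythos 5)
Your specialization of Theorem~\ref{2_57pm} with $\mathcal{P}(x)=1+a\pi^2x$ (i.e.\ $N=1$, $a_0=1$, $a_1=a\pi^2$), the arctangent evaluation of $\int_0^1(1+a\pi^2t^2)^{-1}\,\dt$, and the substitution into \eqref{10_44pm} are all correct. This is exactly the paper's own route, as the text preceding the corollary indicates.
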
Observe that for $a=1/\pi^2$ (\ref{10_57pm}) reduces to the following estimate:
$$
\int_{-\infty}^{\infty}\left(|g(x)|^2 + \frac{\left|g^{\prime}(x)\right|^2}{\pi^2}\right)\,\!\dx \geq \frac{4}{\pi},\ \ \  g\in PW^2,\ \  g(0)=1.
$$
Different choices of the polynomial $\mathcal{P}(x)$ allow us to obtain other interesting inequalities. 
\begin{corollary}
	Fix $0<a<{1}/{\pi^2}$. Then
	\begin{align} \label{3_48pm}
		a\int_{-\infty}^{\infty}\big|g^{\prime}(x)\big|^2\dx+ \bigg(\dfrac{1}{2\pi\sqrt{a}}\,\log\bigg(\dfrac{1+\sqrt{a}\pi}{1-\sqrt{a}\pi}\bigg)\bigg)^{-1}\leq \int_{-\infty}^{\infty}|g(x)|^2\,\dx.
	\end{align}
	for every $g\in PW^2$ which obeys $g(0)=1$.
\end{corollary}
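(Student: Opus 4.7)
The plan is to apply Theorem \ref{2_57pm} with the specific choice $\mathcal{P}(x) = 1 - a\pi^2 x$, a polynomial of degree $N=1$ with coefficients $a_0 = 1$ and $a_1 = -a\pi^2$. This choice is essentially forced: to obtain a term of the form $-a\,|g'(x)|^2$ on the right-hand side of \eqref{10_02pm} one must take $a_1/\pi^2 = -a$. The constraint $0<a<1/\pi^2$ guarantees $\mathcal{P}(1) = 1-a\pi^2 > 0$ (while $\mathcal{P}(0) = 1 > 0$), so this linear polynomial is positive throughout $[0,1]$ and the hypotheses of Theorem \ref{2_57pm} are verified.

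With this substitution, the right-hand side of \eqref{10_02pm} becomes
$$\int_{-\infty}^{\infty}\bigl(|g(x)|^2 - a\,|g'(x)|^2\bigr)\,\dx,$$
and transposing the $g'$-term to the other side produces exactly the expression appearing on the right-hand side of \eqref{3_48pm}. It therefore remains to identify the constant on the left of \eqref{10_02pm} with the reciprocal of the logarithmic quantity in \eqref{3_48pm}.

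For this I would evaluate $\int_0^1 \dt/(1-a\pi^2 t^2)$ via the substitution $u = \sqrt{a}\,\pi t$, which reduces the integral to $\frac{1}{\sqrt{a}\,\pi}\int_0^{\sqrt{a}\,\pi} \du/(1-u^2)$; the standard partial-fraction antiderivative $\tfrac12\log\tfrac{1+u}{1-u}$ then yields the value $\frac{1}{2\sqrt{a}\,\pi}\log\tfrac{1+\sqrt{a}\,\pi}{1-\sqrt{a}\,\pi}$. Taking reciprocals completes the identification and hence the proof.

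No substantive obstacle is anticipated here: the argument is a direct specialization of Theorem \ref{2_57pm} followed by an elementary integral evaluation, and the positivity of $\mathcal{P}$ on $[0,1]$ is the single hypothesis requiring verification—it is precisely this condition that forces the upper bound $a<1/\pi^2$ in the hypothesis of the corollary.
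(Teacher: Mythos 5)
Your proof is correct and takes the approach the paper clearly intends: specialize Theorem \ref{2_57pm} to $\mathcal{P}(x)=1-a\pi^2 x$, verify positivity on $[0,1]$ from the hypothesis $a<1/\pi^2$, and evaluate $\int_0^1(1-a\pi^2 t^2)^{-1}\,\dt=\tfrac{1}{2\pi\sqrt{a}}\log\tfrac{1+\sqrt{a}\pi}{1-\sqrt{a}\pi}$. The paper states this corollary without a written proof, but your derivation matches the stated scheme (``different choices of the polynomial $\mathcal{P}(x)$'') and the computation is accurate.
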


In particular, letting $a\to {1}/{\pi^2}$ in \eqref{3_48pm} we obtain
\begin{align*} 
	\int_{-\infty}^{\infty}\big|g^{\prime}(x)\big|^2\,\dx\leq \pi^2\int_{-\infty}^{\infty}|g(x)|^2\,\dx,\ \ g\in PW^2,\ \ g(0)=1,
\end{align*}
which is exactly the $L^2-$version of the classical Bernstein inequality that holds for every $L^p(\R)$, $p\geq 1$ (see \cite[Theorem 11.3.3]{Boas}).

Observe that the Bernstein inequality follows from Theorem \ref{2_57pm}  if we set $\mathcal{P}(t)=1+\varepsilon -t$ and let $\varepsilon \to 0$. Applying the same reasoning with $\mathcal{P}(t) = (1+\varepsilon -t)^N$, we obtain: 
\begin{corollary}
Let $N$ be a nonnegative integer. Then the inequality 
$$
\sum_{k=0}^N \, \frac{(-1)^k}{\sigma^{2k}}\, {N \choose k}\int_{-\infty}^\infty \big| f^{(k)}(x)\big|^2\,\dx\geq 0
$$
holds for every  function of exponential type at most $\sigma$ such that $f \in L^2(\mathbb{R})$. 
   In particular, for $N=2$, 
$$
\int_{-\infty}^\infty \big| f^{\prime}(x)\big|^2\,\dx \leq \frac{1}{2} \left( \sigma^2 \int_{-\infty}^\infty \big| f(x)\big|^2\,\dx + \frac{1}{\sigma^{2}} \int_{-\infty}^\infty \big| f^{\prime\prime}(x)\big|^2\,\dx \right).
$$
\end{corollary}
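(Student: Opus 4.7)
The plan is to apply Theorem~\ref{2_57pm} with the one-parameter family of polynomials $\mathcal{P}_\varepsilon(x) = (1+\varepsilon-x)^N$ for $\varepsilon>0$, and then let $\varepsilon\to 0^+$. Clearly $\mathcal{P}_\varepsilon$ is positive on $[0,1]$, and its binomial expansion yields the coefficients $a_n = \binom{N}{n}(1+\varepsilon)^{N-n}(-1)^n$. Substituting into \eqref{10_02pm}, for every $g\in PW^2$ with $g(0)=1$,
$$
\left(\int_0^1 \frac{\dt}{(1+\varepsilon-t^2)^N}\right)^{-1} \;\leq\; \int_{-\infty}^{\infty} \sum_{n=0}^{N} \binom{N}{n}\,\frac{(1+\varepsilon)^{N-n}(-1)^n}{\pi^{2n}}\,\big|g^{(n)}(x)\big|^2\,\dx.
$$

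Letting $\varepsilon\to 0^+$, the integrand $1/(1+\varepsilon-t^2)^N$ increases monotonically to $1/(1-t^2)^N$, which fails to be integrable near $t=1$ once $N\ge 1$ (the case $N=0$ is trivial), so by monotone convergence the reciprocal on the left tends to $0$. On the right, the sum is finite, every derivative $g^{(n)}$ lies in $L^2(\R)$ by Bernstein's inequality, and the coefficients depend continuously on $\varepsilon$; passing to the limit then produces
$$
0 \;\leq\; \int_{-\infty}^{\infty}\sum_{n=0}^{N}\binom{N}{n}\,\frac{(-1)^n}{\pi^{2n}}\,\big|g^{(n)}(x)\big|^2\,\dx
$$
for every $g\in PW^2$ with $g(0)=1$.

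To pass to arbitrary $f\in L^2(\R)$ of exponential type at most $\sigma$, I first rescale and then drop the point-normalization. Given such $f\not\equiv 0$, set $g(z)=f(\pi z/\sigma)$; then $g\in PW^2$, and a change of variables gives $\int_{-\infty}^{\infty}|g^{(n)}(x)|^2\dx = (\sigma/\pi)(\pi/\sigma)^{2n}\int_{-\infty}^{\infty}|f^{(n)}(u)|^2\du$. To remove the assumption $g(0)=1$, pick $b\in\R$ with $g(b)\neq 0$ and apply the previous display to $\widetilde g(z) = g(z+b)/g(b)\in PW^2$, which satisfies $\widetilde g(0)=1$: translation-invariance of the Lebesgue integrals together with the quadratic homogeneity in $g$ of each term lets the factor $1/|g(b)|^2$ cancel through, yielding the same inequality for $g$ itself. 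Substituting the rescaling identity and dividing by the common factor $\sigma/\pi$ then converts each prefactor $1/\pi^{2n}$ into $1/\sigma^{2n}$, which is exactly the claim. The special case $N=2$ is an immediate rearrangement of $\int|f|^2 - (2/\sigma^2)\int|f'|^2 + \sigma^{-4}\int|f''|^2 \geq 0$. The only step needing a modicum of care is the limit $\varepsilon\to 0^+$, which rests on the divergence of $\int_0^1(1-t^2)^{-N}\dt$ for $N\ge 1$ and on the standard fact that every derivative of a $PW^2$ function lies in $L^2(\R)$; no further obstacle is expected.
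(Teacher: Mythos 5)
Your proposal is correct and follows exactly the route the paper indicates (apply Theorem~\ref{2_57pm} with $\mathcal{P}(t)=(1+\varepsilon-t)^N$ and let $\varepsilon\to 0^+$, with the divergence of $\int_0^1(1-t^2)^{-N}\dt$ driving the left side to $0$); you have simply made explicit two details the paper leaves implicit, namely the dilation $g(z)=f(\pi z/\sigma)$ converting type $\sigma$ to type $\pi$, and the translation-plus-scaling trick that discards the normalization $g(0)=1$ once the inequality is homogeneous in $g$ and its left-hand side has dropped to zero.
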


The latter is a curious result that resembles some classical ones, due to Landau and Hardy and Littlewood. In 1913, Landau \cite{Landau} proved that if $f$ is a real function, $f \in C^2(\mathbb{R})$, and the inequalities $\| f \|_\infty \leq 1$ and $\| f^{\prime\prime} \|_\infty \leq 1$ for the uniform norms of $f$ and $f^{\prime\prime}$ on the real line hold, so does $\| f^{\prime} \|_\infty \leq \sqrt{2}$.   

Hardy and Littlewood \cite[Theorem 6]{HL} proved that, if $y$ and $y''$ are in $L^2[0,\infty)$, then 
$$
\left( \int_{0}^\infty [ y'(x) ]^2 \,\dx \right)^2 \leq 4 \int_{0}^\infty [ y(x) ]^2\,\dx \int_{0}^\infty [ y''(x) ]^2\,\dx.
$$
Moreover, the constant $4$ is the best possible. The equality is attained if and only if $y(x) = c\, Y(ax)$, where $c$ and $a$ are real constants and 
$$
Y(x) = e^{-x/2} \sin \left( \frac{\sqrt{3}}{2}\, x - \frac{\pi}{3} \right).
$$
Theorem 7 in \cite{HL} states that, under the same requirements, the inequality 
$$
\int_{0}^\infty ( y^2(x) + [y''(x)]^2 - [y'(x)]^2 )\,\dx \geq 0
$$
holds with equality as before, but with $a=1$.

\section{Proof of Theorem \ref{5_06pm}: Qualitative aspects} \label{Section Proof}
	
	For $f\in L^1(\R)$, we normalize the Fourier transform $\widehat{f}$ of $f$ as 
	$$
	\widehat{f}(\xi)=\int_{-\infty}^\infty f(x)e^{-2\pi ix\xi}\,\dx.
	$$

\subsection{Proof of a)}  Replacing $f(x)$ by $(f(x)+f(-x))/2$, we see that we may restrict our search for the infimum \eqref{6_53pm} to the even functions in $\mathcal{F}_1$.  Consider an extremizing sequence $\{f_n\}_{n\geq 1}\subset\mathcal{F}_1$ such that $f_n$ is even, $f_n(0)=\|f_n\|_\infty=1$, and $\|f_n\|_1\to\mathcal{A}_1$.  It follows from \cite[Theorem 3.3.6]{Niko}, by passing to a  subsequence if necessary, that there is $F:\C\to\C$  of exponential type at most $2\pi$ such that $F \in L^1(\R)$ and
\begin{align*} 
	f_n(x)  \to F(x)\ \ \ \mathrm{as}\ \  n\to\infty
\end{align*}
uniformly on any compact set of $\C$. Therefore, $F$ is even, $F\in \mathcal{F}_1$ and $F(0)=1$.  Fatou's lemma implies that $\|F\|_1\leq \mathcal{A}_1$, and by the definition of $\mathcal{A}_1$ as an infimum we conclude that $F$ is an extremizer for \eqref{6_53pm}. 



\subsection{Proof of b)} Let $F$ be an even extremizer with $F(0)=1$. Clearly, it has no real zeros. Indeed, since $F$ is real, nonnegative and decreasing on the positive real axis, if it vanishes at $x_0>0$, it does for all $x>x_0$ which is impossible because $F$ is entire and $F(0)=1$. Since $F$ is even, also it does not vanish at a negative $x_0$.  
Therefore, all the zeros of $F$ satisfy $|\im{z}|>0$. Now, assume that $F$ has a zero at $z=ib$, for $b\in\R$. Since $F$ is real-valued, it also has $z=-ib$ as a zero. Consider the entire function
$$
G(z)=\dfrac{b^2F(z)}{z^2+b^2}.
$$
Note that $G(0)=1$ and $G\in \mathcal{F}_1$. Since
$$
\int_{-\infty}^{\infty}G(x)\,\dx<\int_{-\infty}^{\infty}F(x)\,\dx
$$
we get a contradiction. Therefore, all the zeros of $F$ satisfy $|\re{z}|>0$. Now, assume that $z=a+ib$ is a zero of $F$ with $|b|\geq |a|>0$. Since $F$ is real-valued and even, we have that $z=a-ib$, $z=-a+ib$, and $z=-a-ib$ are also zeros. Note that all these zeros are different. Then, the entire function

$$
H(z)=\dfrac{(a^2+b^2)^2F(z)}{\left((z-a)^2+b^2\right)\left((z+a)^2+b^2\right)}
$$
is in $\mathcal{F}_1$, and using that $|b|\geq |a|$, it is easy to see that
$$
\int_{-\infty}^{\infty} H(x)\,\dx < \int_{-\infty}^{\infty} F(x)\,\dx 
$$
which gives a contradiction. We conclude that $|b|<|a|$.

\section{Proof of Theorem \ref{5_06pm}: quantitative aspects}\label{sec:lower_bounds}

\subsection{Representation lemma} The following lemma gives a representation for any even function in $\mathcal{F}_1$. 
\begin{lemma} \label{7_25pm} If $f\in\mathcal{F}_1$ is even, then it can be represented in $\R$ in the form
	\begin{align} \label{9_41pm}
		f(x)=\int_{-\infty}^{x}-t\,|h(t)|^2\,\dt,
	\end{align}
	where $h:\C\to\C$ is an entire function of exponential type at most $\pi$ such that $|h(x)|=|h(-x)|$ for all $x\in\R$, and
	$xh\in L^2(\R)$. Conversely, if $f$ is a function of the form \eqref{9_41pm}, then it has an analytic extension to $\C$ which is an even function in $\mathcal{F}_1$.
\end{lemma}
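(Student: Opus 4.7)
The plan is to derive the representation from the differential identity $f'(x)=-x|h(x)|^2$; the forward direction constructs $h$ from $f$ via Krein's factorization (cited in the introduction), and the converse recovers $f$ by integration. For the forward direction, we may restrict to even $f\in\mathcal{F}_1$, since the symmetrization $(f(x)+f(-x))/2$ preserves all defining properties of $\mathcal{F}_1$. Set $\phi(z)=-f'(z)/z$: since $f$ being even and entire forces $f'(0)=0$, the function $\phi$ extends to an entire function of exponential type at most $2\pi$, and the radial monotonicity of $f$ gives $\phi(x)\ge 0$ for all real $x$. One verifies $\phi\in L^1(\mathbb{R})$ from $f'\in L^1(\mathbb{R})$ (which follows from $\int_0^\infty(-f'(x))\,\dx=f(0)$ by monotonicity and $f(+\infty)=0$) together with boundedness of $\phi$ near the origin. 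Krein's factorization theorem then yields $h\in PW^2$ with $hh^*=\phi$, so $|h(x)|^2=-f'(x)/x$ on $\mathbb{R}$; the evenness of $\phi$ immediately forces $|h(-x)|=|h(x)|$, and integration by parts produces
$$\int_{\mathbb{R}}x^2|h(x)|^2\,\dx=\int_{\mathbb{R}}-xf'(x)\,\dx=\int_{\mathbb{R}}f(x)\,\dx<\infty,$$
where we use the standard fact that $xf(x)\to 0$ at $\pm\infty$ for monotone $L^1$ functions. Thus $xh\in L^2$, and integrating $f'(t)=-t|h(t)|^2$ from $-\infty$, where $f$ vanishes, recovers \eqref{9_41pm}.

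For the converse, given $h$ with the stated properties, we define $f$ by \eqref{9_41pm}. First, $h\in L^2(\mathbb{R})$ since $|h(x)|\le|xh(x)|/|x|$ for $|x|\ge 1$ combined with continuity of $h$ on $[-1,1]$, so Cauchy--Schwarz gives $\int|t||h(t)|^2\,\dt\le\|xh\|_2\|h\|_2<\infty$, making the integrand in \eqref{9_41pm} absolutely integrable. The critical subtlety is the analytic extension: the real-line identity $h(x)h^*(x)=|h(x)|^2=|h(-x)|^2=h(-x)h^*(-x)$ propagates to all of $\mathbb{C}$ by analytic continuation, so $w\mapsto wh(w)h^*(w)$ is a genuinely odd entire function of exponential type at most $2\pi$. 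Therefore $\tilde f(z)=f(0)-\int_0^z wh(w)h^*(w)\,\d w$ is entire of exponential type at most $2\pi$, even by oddness of the integrand, and agrees with $f$ on $\mathbb{R}$. Nonnegativity and radial monotonicity of $f$ are immediate from the equivalent formula $f(x)=\int_x^\infty t|h(t)|^2\,\dt$ for $x\ge 0$ (valid since the odd integrand has zero integral over $\mathbb{R}$), and Fubini yields $\int_{\mathbb{R}}f(x)\,\dx=\int_{\mathbb{R}}t^2|h(t)|^2\,\dt<\infty$, confirming $f\in\mathcal{F}_1$.

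The main obstacle is controlling the global analytic structure: in the forward direction, producing $h$ from the nonnegative entire $\phi$ depends on Krein's factorization theorem (cited in the introduction but non-elementary), while in the converse, upgrading the pointwise real symmetry $|h(x)|=|h(-x)|$ to the global identity $h(z)h^*(z)=h(-z)h^*(-z)$ between entire functions is what allows the primitive in \eqref{9_41pm} to extend to an entire function of the correct exponential type.
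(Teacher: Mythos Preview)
Your proof is correct and follows essentially the same route as the paper: Krein factorization applied to (a variant of) $-f'$ in the forward direction, and construction of the analytic extension as a primitive of $-z\,h(z)h^*(z)$ in the converse. The only cosmetic difference is that the paper applies Krein to $-zf'(z)$ and then sets $h=g/z$ (so that $xh=g\in L^2$ comes for free from Krein), whereas you apply Krein directly to $\phi(z)=-f'(z)/z$ and then check $xh\in L^2$ separately via integration by parts; similarly, in the converse the paper uses integration by parts where you use Fubini to obtain $\int f = \int x^2|h|^2$.
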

\begin{proof} Let $f\in\mathcal{F}_1$ be even.  Integration by parts yields 
	\begin{align} \label{6_36pm}
		\int_{0}^{x}f(t)\,\dt = xf(x)+\int_{0}^{x}t\, |f'(t)|\,\dt.
	\end{align} 
 Since the integrals on both sides of \eqref{6_36pm} are increasing functions of $x$, and $f\in L^1(\R)$, when $x\to+\infty$  we can conclude that $\displaystyle\lim_{x\to\infty} xf(x)$ exists (similarly $\displaystyle\lim_{x\to-\infty} xf(x)$ exists). The fact that $f \in L^1(\R)$ forces these limits to be zero and one can also conclude that $x\,f'(x)\in L^1(\R)$. By the Plancherel-P\'olya theorem, $f'(z)$ has exponential type $2\pi$ and so does $-zf'(z)$. From the Krein decomposition theorem \cite[p. 154]{Krein}, it follows that $-zf'(z)=g(z)g^*(z)$ for some $g\in PW^2$. Moreover, since $f$ attains its maximum at $x=0$, then $f'(0)=0$. Defining $h(z)=g(z)/z$, we rewrite the latter in the form
	\begin{align}  \label{6_33pm}
		-zf'(z)=z^2h(z)h^*(z),
	\end{align}
	where $h$ is an entire function of exponential type at most $\pi$ and $xh\in L^2(\R)$. Since $f'$ is odd, then $|h(x)|=|h(-x)|$ for $x\in\R$. Finally, integrating \eqref{6_33pm} appropriately, we arrive at \eqref{9_41pm}. Conversely, assume the representation \eqref{9_41pm}. Note that $f$ has an analytic extension on $\C$ (also denoted by $f$) of the form
	\begin{align*}
		f(z)= \int_{-\infty}^0-t|h(t)|^2\,\dt+\int_{[0,z]}-s\,h(s)h^*(s)\,\ds,
	\end{align*} 
where $[0,z]$ denotes the straight segment connecting $0$ and $z$.
	Since $h$ is an entire function of exponential type at most $\pi$, $f$ is an entire function of exponential type at most $2\pi$. From \eqref{9_41pm} it follows that $\displaystyle\lim_{x\to-\infty}f(x)=0$, and using the fact that $|h(x)|=|h(-x)|$, we conclude that $f$ is also even and $\displaystyle\lim_{x\to\infty}f(x)=0$. On the other hand, differentiating \eqref{9_41pm} we derive
\begin{align}  \label{8_51pm}
	f'(x)=-x\, |h(x)|^2\ \ \ \mathrm{for}\ \ x\in\R,
\end{align}
	 which implies that $f$ is radially decreasing and $f(x)\geq 0$. Moreover, \eqref{8_51pm} and $xh \in L^1(\R)$ imply $\displaystyle\lim_{x\to  \pm\infty}xf(x)=0$. Integration by parts shows that
		\begin{align*}
		\int_{-\infty}^{\infty}f(x)\,\dx =\int_{-\infty}^{\infty}x^2|h(x)|^2\,\dx,
	\end{align*} 
which yields $f\in L^1(\R)$. \end{proof}

From Lemma \ref{7_25pm}, we can reformulate the monotone-delta problem as the one to determine 
\begin{equation} \label{eq:A123}
    \mathcal{A}_1=	\inf_{\substack{h\in \mathcal{F}_2\\h\not\equiv0}}\dfrac{2\displaystyle\int_{-\infty}^{\infty}|x|^2|h(x)|^2\,\dx}{\displaystyle\int_{-\infty}^
		\infty|x||h(x)|^2\,\dx},
\end{equation} 
where the family $\mathcal{F}_2$ consists of those entire functions $h:\C\to\C$ of exponential type at most $\pi$ such that $xh\in L^{2}(\R)$ and $|h(x)|=|h(-x)|$.

\subsection{An $L^2-$computational approach}\label{sec:L2}
A natural approach for constructing functions in $\mathcal{F}_2$ (and therefore in $\mathcal{F}_1$), and computationally solving \eqref{eq:A123}, starts by finding an orthonormal system for the space $L^2(\R,x^2\dx)$. 
Note that $\mathcal{F}_2$ is a Hilbert space with the inner product 
\begin{equation*}
 \langle f,g\rangle_{\mathcal{F}_2} = \langle xf, xg \rangle_{L^2(\R)},
\end{equation*}
and norm
\begin{align} \label{normF2}
\|f\|_{\mathcal{F}_2}=\left(\int_{-\infty}^{\infty}|x|^2|f(x)|^2\,\dx\right)^{\!\frac{1}{2}}.
\end{align}
For positive integers $k$, we define the even functions
\begin{equation}\label{eq:hk_def}
    h_k(x)=\frac{4\sqrt{2}}{\pi}\cdot \frac{ \cos \pi x}{ (2k-1)^2 - 4x^2},
\end{equation}
and note that $h_k\in \mathcal{F}_2$ for all positive integers $k$. Gorbachev \cite{Gor2005} previously considered this family of functions to obtain fine numerical estimates for other Fourier extremal problems, and it has also been used in \cite{CQH} for similar purposes in related extremal problems introduced by Carneiro, Milinovich, and Soundararajan \cite{CMS}. Regarding this system, we can say the following:
\begin{proposition}\label{prop:complete} The family $(h_k)_{\substack{  k\ge 1}}$ is a complete orthonormal system in the closed subspace $\{h\in \mathcal{F}_2: \ h\text{ is even.} \}.$
\end{proposition}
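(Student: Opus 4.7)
My plan is to transport the problem from $\mathcal{F}_2$ to $L^2(I)$ via the Fourier transform and reduce both orthonormality and completeness to classical facts about the spectral decomposition of the Laplacian on the interval $I$. First I would observe that for any $h\in\mathcal{F}_2$, the Paley--Wiener theorem gives $\mathrm{supp}(\widehat{h})\subseteq I$, and the $L^1$ argument recorded in Subsection \ref{sec:proofNumeric} shows that $\widehat{h}$ is continuous and therefore vanishes at $\pm 1/2$. Since $xh\in L^2(\R)$ and $\widehat{xh}(\xi)=\frac{i}{2\pi}\widehat{h}'(\xi)$, this places $\widehat{h}$ in $H^1_0(I)$; Plancherel then gives the isometry
\begin{equation*}
\langle h,g\rangle_{\mathcal{F}_2}=\langle xh,xg\rangle_{L^2(\R)}=\frac{1}{4\pi^2}\int_{-1/2}^{1/2}\widehat{h}'(\xi)\,\overline{\widehat{g}'(\xi)}\,\d\xi,
\end{equation*}
under which even $h$ correspond to even $\widehat{h}$, and $\widehat h'$ is then odd.

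Next I would compute $\widehat{h_k}$ by direct evaluation of $\int_{-1/2}^{1/2}\cos(k\pi\xi)\,e^{2\pi ix\xi}\,\d\xi$; using the identity $\sin(k\pi/2\pm\pi x)=(-1)^{(k-1)/2}\cos(\pi x)$, which holds because $k$ is odd, this gives
\begin{equation*}
\widehat{h_k}(\xi)=\frac{2\sqrt{2}\,(-1)^{(k-1)/2}}{k}\cos(k\pi\xi)\,\chi_I(\xi),
\end{equation*}
a function that automatically vanishes at $\pm 1/2$. Differentiating and applying the Fourier correspondence above yields
\begin{equation*}
\widehat{xh_k}(\xi)=-i\sqrt{2}\,(-1)^{(k-1)/2}\sin(k\pi\xi)\,\chi_I(\xi),
\end{equation*}
and the orthonormality $\langle h_k,h_j\rangle_{\mathcal{F}_2}=\delta_{kj}$ follows from the elementary identity $\int_{-1/2}^{1/2}\sin(k\pi\xi)\sin(j\pi\xi)\,\d\xi=\tfrac{1}{2}\delta_{kj}$ for positive odd integers $k,j$, together with $(-1)^{k-1}=1$.

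For completeness, I would assume $f\in\mathcal{F}_2$ is even with $\langle f,h_k\rangle_{\mathcal{F}_2}=0$ for every odd $k\ge 1$; the same Plancherel computation turns this hypothesis into $\int_{-1/2}^{1/2}\widehat{f}'(\xi)\sin(k\pi\xi)\,\d\xi=0$ for every odd $k\ge 1$, with $\widehat{f}'$ an odd element of $L^2(I)$. The main conceptual obstacle, and the key observation, is that $\{\sqrt{2}\sin(k\pi\xi)\}_{k\geq 1,\,k\text{ odd}}$ is precisely the odd part of the Neumann eigenbasis of $-d^2/d\xi^2$ on $I$, whose full eigenbasis $\{1\}\cup\{\sqrt{2}\cos(k\pi\xi)\}_{k\geq 2,\,k\text{ even}}\cup\{\sqrt{2}\sin(k\pi\xi)\}_{k\geq 1,\,k\text{ odd}}$ is complete in $L^2(I)$ by the spectral theorem for self-adjoint operators with compact resolvent. (Equivalently, the $\widehat{h_k}$ themselves are the even Dirichlet eigenfunctions on $I$, and differentiation matches Dirichlet eigenfunctions to Neumann eigenfunctions.) Completeness of the odd half in the odd subspace of $L^2(I)$ then forces $\widehat{f}'\equiv 0$ on $I$, and combined with $\widehat{f}(\pm 1/2)=0$ this yields $\widehat{f}\equiv 0$, hence $f\equiv 0$.
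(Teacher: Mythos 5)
Your proof is correct, and while it shares the same overall strategy as the paper's---pass to the Fourier side, recognize $\widehat{(xh_k)}$ as a multiple of $\sin(k\pi\xi)\chi_I$, and invoke completeness of the sine system in the odd subspace of $L^2(I)$---the two arguments diverge in how they establish and exploit that completeness. The paper introduces the isometry $Th(t)=\widehat{(xh)}(t)\,e^{\pi i t}$, where the phase factor $e^{\pi i t}$ is chosen precisely so that $Th$ can be tested directly against the standard exponential basis $\{e^{2\pi i j t}\}_{j\in\Z}$; expanding $e^{\pi i(1-2j)t}$ into its cosine and sine parts, the cosine contribution dies by the parity of $\widehat{(xh)}$ and the sine contribution dies by hypothesis. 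You instead observe that $\widehat{(xh)}=\frac{i}{2\pi}\widehat{h}'$ is odd whenever $h$ is even, and that $\{\sqrt{2}\sin(k\pi\xi)\}_{k\text{ odd}}$ is exactly the odd half of the Neumann eigenbasis of $-d^2/d\xi^2$ on $I$; parity takes care of orthogonality to the even half automatically, and the spectral theorem supplies completeness. Your identification of $\widehat{h_k}$ with the even Dirichlet eigenfunctions (and hence $\widehat{(xh_k)}$ with the odd Neumann ones, via differentiation) is a nice conceptual gloss that the paper does not make explicit. Your route to the formula for $\widehat{(xh_k)}$ also differs slightly---you compute $\widehat{h_k}$ first and differentiate, whereas the paper guesses $s_k$ and verifies $\widehat{s_k}=-xh_k$---but both land on the same normalizing constant. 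One minor point worth spelling out if you wrote this up carefully: the identity $\widehat{xh}=\frac{i}{2\pi}\widehat{h}'$ and the membership $\widehat{h}\in H^1_0(I)$ need the supporting facts established earlier in the paper ($h\in L^1$, continuity and vanishing of $\widehat{h}$ at $\pm 1/2$, Paley--Wiener applied to $xh$), which you do cite, so the argument is complete as stated.
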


\begin{proof}

Note that, if $h\in \mathcal{F}_2$ is even, then $xh\in L^2(\R)$ is odd. Furthermore, we have that 
\begin{equation}\label{eq:sk}
 \widehat{(xh_k)}(t)= i(-1)^{k}\sqrt{2} \sin(\pi (2k-1) t )\chi_I(t) =: s_k(t),
\end{equation}
where $I=[-1/2,1/2]$ and $\chi_{I}$ denotes the characteristic function of the interval $I$. To see this, since $s_k\in L^1(\R)\cap L^2(\R)$, we may compute $\widehat s_k$ in a straightforward manner to verify that $\widehat s_k(x)=-xh_k(x)$, and then we conclude \eqref{eq:sk} by Fourier inversion in $L^2(\R)$.
Now consider the operator 
\begin{equation*}
 T:\mathcal{F}_2\to L^2(I)
\end{equation*}
defined by $Th(t):= \widehat {(xh)}(t)e^{\pi i t }$. 
By Plancherel's theorem and the Paley-Wiener theorem, $T$ is a linear isometry, that is, $\langle f, g \rangle_{\mathcal{F}_2} = \langle Tf, Tg \rangle_{L^2(I)}$. Therefore, for positive integers $k$ and $j$, we find that 
\begin{equation*}
 \langle h_k, h_j \rangle_{\mathcal{F}_2} = \langle s_k, s_j \rangle_{L^2(I)}= \delta_{kj},
\end{equation*}
where $\delta_{kj}=1$ if $k=j$, and $0$ otherwise. Here, to compute the inner product over $L^2(I)$, we may apply the identity $2\sin(\pi (2k-1) t )\sin(\pi (2j-1) t ) = \cos(2\pi (k-j) t )-\cos(2\pi (k+j-1) t )$. This shows that $h_k$ is orthonormal.

We now show that it is complete. Let $h\in\mathcal{F}_2$ be even, such that $\langle h, h_k\rangle_{\mathcal{F}_2}=0$ for all positive integers $k$. We must show that $h\equiv 0$. First, denote $H(t)=\widehat{(xh)}(t)$, and note that, by Plancherel's theorem and \eqref{eq:sk}, the condition $\langle h, h_k\rangle_{\mathcal{F}_2}=0$ implies that 
\begin{equation}\label{eq:Horthog}
\int_I H(t)\sin(\pi(2j-1)t)\,\d t =0
\end{equation}
for all positive integers $j$. Actually, since $\sin (-x)=-\sin x$, \eqref{eq:Horthog} holds for all integers $j$. 

Now, since $T$ is an isometry into $L^2(I)$, by the theory of Fourier series on $L^2(I)$, it is enough to show that $\langle Th, e_j\rangle_{L^2(I)}=0$ for all integers $j$, where $e_j(t)=e^{2\pi i jt}$. In fact, for an integer $j$, we have
\begin{equation*}
\begin{split}
  \langle Th, e_j\rangle_{L^2(I)} &= 
  \int_I H(t)e^{\pi i t}e^{-2\pi i t j }\,\d t\\
  &= \int_I H(t)\cos(\pi(2j-1)t)\,\d t - i\int_I H(t)\sin(\pi(2j-1)t)\,\d t.
\end{split}
\end{equation*}
The first integral in the last line is 0 since $H$ is odd, and the second integral is 0 by \eqref{eq:Horthog}. Therefore, $\langle Th, e_j\rangle_{L^2(I)}=0$ for all integers $j$, and then $Th\equiv 0$ and $h\equiv 0$, as desired.
\end{proof}

More general orthogonality results can also be obtained from the theory of de Branges spaces; see \cite[Section 2.2]{Carneiroteam} and the references therein. See also \cite[Theorem 4.2]{Gor2019} for a similar general orthogonality result.

\subsection{Proof of c): Generating bounds}\label{sec:upper_bounds}
Once we have a complete orthonormal system, we proceed to obtain numerical examples as follows. For a positive integer $d$, let $\mathcal{F}_{2,d}=\,$span $ \{h_k : 1\le k\le d\}\subset \mathcal{F}_2$. Let $Q\in\R^{d\times d}$ be the matrix defined by 
\begin{equation}\label{eq:lambda}
    Q_{kj} = \int_{0}^\infty x\,h_k(x)h_j(x)\,\d x.
\end{equation}
Then, since $h_k$ are orthonormal, one can see that the reciprocal of the infimum in \eqref{eq:A123}, when taken over the space $\mathcal{F}_{2,d}$, satisfies 
\begin{equation*}
    |\lambda_d| = \max_{\substack{h\in \mathcal{F}_{2,d}\\h\not\equiv0}} \dfrac{\displaystyle\int_{0}^
		\infty x\, |h(x)|^2\,\dx}{\displaystyle\int_{-\infty}^{\infty}|x|^2|h(x)|^2\,\dx},
\end{equation*}
where $\lambda_d$ is the largest eigenvalue (in absolute value) of $Q$, and the maximum is attained when 
\begin{equation}\label{eq:hL2}
h=\mathbf{a}\cdot (h_1,h_2,\ldots, h_{d}), 
\end{equation}
for $\mathbf{a}\in \R^{d}$ an eigenvector of $Q$ associated to $\lambda_d$. In particular, we have that $\mathcal{A}_1\le |\lambda_d|^{-1}$. 
Moreover, we now proceed to prove that as $d\to\infty$, we have $|\lambda_d|^{-1}\to \mathcal{A}_1$. Furthermore, we are able to explicitly estimate the speed of convergence, yielding lower bounds that also converge to $\mathcal{A}_1$ (see Theorem \ref{thm:lower_bounds} below). 
With that goal in mind, we proceed to study the coefficients $Q_{kj}$ defined in \eqref{eq:lambda}.  Our first observation is that they can be made more explicit. When $k > j$, we expand in partial fractions and use the  trigonometric identity $2\cos^2(x)=1+\cos(2x)$ to see that
\begin{align*}
    &(k-j)(k+j-1)Q_{kj}=\int_{0}^\infty \frac{32(k-j)(k+j-1)x\cos^2(\pi x)}{\pi^2(4x^2-(2k-1)^2)(4x^2-(2j-1)^2)}\dx \\
    &=\int_{0}^\infty \frac{1+\cos(2\pi x)}{\pi^2}\left(\frac{1}{2x-(2k-1)}+\frac{1}{2x+(2k-1)}-\frac{1}{2x-(2j-1)}-\frac{1}{2x+(2j-1)}\right)\dx.
\end{align*}
By carefully splitting the integral, changing variables by translations and dilations, and regrouping all the pieces, one obtains
\begin{equation*}
    Q_{kj} =\dfrac{-1}{\pi^2 \, (k-j) \,(k+j-1)} \displaystyle\int_{2j-1}^{2k-1} \frac{1-\cos(\pi x)}{x}\,\d x.
\end{equation*}
When $k=j$, a similar argument leads to the expression
\begin{align*}
    Q_{kk} &= \dfrac{2}{\pi^2(2k-1)}\int_0^{2k-1}\frac{1-\cos(\pi x)}{x^2}\,\d x= \dfrac{-4+2(2k-1)\pi \, \text{Si}(\pi (2k-1))}{\pi^2\, (2k-1)^2},
\end{align*}
  where $\mathrm{Si}(x)=\int_0^x \sin(t)/t\,\d t $ is the standard sine integral function. These expressions readily imply that
 \begin{align*}
      |Q_{kj}|\lesssim \frac{1}{d\,(k-j)},\,\,\,\,\, \mbox{and}\,\,\,\,\,
      |Q_{kk}|\lesssim \frac{1}{(2k-1)},\,\,\,\,\, \mathrm{when}\,\,d\leq j< k. 
  \end{align*}
These inequalities, with effective constants, lead to bounds that can be used to explicitly estimate the speed of convergence of $|\lambda_d|^{-1}$ to $\mathcal{A}_1$. By taking a particular value of $d$, we obtain the bounds stated in Theorem \ref{5_06pm}.  Before we state our general bounds, we briefly remark that, from the aforementioned work \cite{Carneiroteam} (see Theorem 2 therein), we may restrict the search for the infimum in \eqref{eq:A12} to \textit{even} functions $h\in \mathcal{F}_2$.

\begin{theorem}\label{thm:lower_bounds}
	Let $h\in\mathcal{F}_2$ be even and not identically 0. Define $h_k$ as in \eqref{eq:hk_def}, and for a positive integer $d$, let $\lambda_d$ be the maximum eigenvalue (in absolute value) of the matrix $Q$ defined in \eqref{eq:lambda}. Then, for $d\ge 1000$, 
	\begin{equation*}
		\displaystyle\int_{-\infty}^
		\infty |x|\,|h(x)|^2\,\dx 
		< 2 \left(
		{\displaystyle\int_{-\infty}^{\infty}|x|^2|h(x)|^2\,\dx}
		\right)
		\left( |\lambda_d| + \frac{\sqrt{\log d}}{d} + \frac{1}{d} \right).
	\end{equation*}
 In particular, for any $d\ge 1000$, we have 
	\[ \left( |\lambda_d| + \frac{\sqrt{\log d}}{d} + \frac{1}{d} \right)^{-1} < \mathcal{A}_1 \le |\lambda_d|^{-1}.\]
	When $d=3010$, one obtains\footnote{These numerical computations were rigorously verified by using ball arithmetic with the Arb library \cite{arb}. 
	} 
	\begin{equation*}
		1.2750 <\mathcal{A}_1<1.27714.
	\end{equation*}
\end{theorem}
\begin{proof}
 Write $h=\sum_{k=1}^\infty a_kh_k$, with $a_k\in\C$. Denote $\mathbf{a}=(a_k)_{k\ge 1}$ and $\|\mathbf{a}\|_2:=\sqrt{\sum_{k\ge 1}|a_k|^2 }$, so that $\|h\|_{\mathcal{F}_2}=\|\mathbf{a}\|_2$ (see \eqref{normF2}). Now, consider the functional $J:\mathcal{F}_2\backslash\{0\}\to \R$ given by \[J(h):=\frac{\displaystyle\int_{0}^
	\infty x\,|h(x)|^2\,\dx } 
{\displaystyle\int_{-\infty}^{\infty}|x|^2|h(x)|^2\,\dx}. \]
 First note that $J$ is continuous on $\{h\in\mathcal{F}_2:\, h\not\equiv 0\}$. Indeed, from the classical one-delta problem, we have the trivial lower bound $\mathcal{A}_1\ge 1.$ For $f_1,$ $f_2\in \mathcal{F}_2$, the triangle inequality 
 yields 
 \[
 \left|\sqrt{\int_{0}^\infty|x|\,|f_1(x)|^2\,\d x}-
 \sqrt{\int_{0}^\infty|x|\,|f_2(x)|^2\,\d x}
 \right| \le 
 \sqrt{\int_{0}^\infty|x|\,|f_1(x)-f_2(x)|^2\,\d x} \le \|f_1-f_2\|_{\mathcal{F}_2},
 \]
 which implies the desired continuity. Here, we used that $\mathcal{A}_1\le 1$ in the last inequality.
 Now, fix $d\ge1000$, and for a parameter $M\geq d$, consider $H_M=\sum_{k=1}^M a_kh_k$. One has
 
\begin{align}\label{eq:Jh}
    J(H_M)\cdot \left(|a_1|^2+|a_2|^2+\ldots+|a_M|^2\right) &= \sum_{k,j=1}^d a_k \ov{a_j}\, Q_{kj} 
    + 2\, \re \sum_{\substack{ d < k\leq M 
    \\ 1\le j\le d } }
    a_k \ov{a_j}\, Q_{kj} 
    + \sum_{ k,j=d+1}^M a_k \ov{a_j}\, Q_{kj} \nonumber \\
    &= J(H_d)\cdot \left(|a_1|^2+|a_2|^2+\ldots+|a_d|^2\right) + J_1 + J_2
\end{align}
Here, $Q_{kj}$ is defined as in equation \eqref{eq:lambda}. By definition of $\lambda_d$, we have $|J(H_d)|\le  |\lambda_d|$. We now estimate $|J_1|$ and $|J_2|$.  One can verify that  $Q_{kk}>0$ and that $Q_{kj}<0$ for $k\neq j$. 
  To obtain upper bounds for $|Q_{kj}|$, note that, since the local maxima of Si$(x)$ form a decreasing sequence, we have $0<\text{Si}(\pi(2k-1))\leq\text{Si}(2001\pi )$ for all $k>1000$. Additionally, since $|\cos(\pi x)|\le 1$, we have that, for $1\le j < k$:
  \[
  0<\int_{2j-1}^{2k-1} \frac{1-\cos(\pi x)}{x}\,\d x \le  \int_{2j-1}^{2k-1} \frac{2}{x}\,\d x = 2\log\left(1+\frac{2(k-j)}{2j-1}\right).
  \]
This yields the inequalities 
  \begin{align*}
      |Q_{kj}|\le \frac{2\log\left(1+\tfrac{2(k-j)}{2j-1}\right)}{\pi^2\,(k-j)\,(k+j-1)},\,\,\,\,\,\,\,\,\,\,\, \mbox{and}\,\,\,\,\,\,\,\,\,\,\,\,
      |Q_{kk}|\le \frac{2\, \text{Si} (2001\pi)}{\pi(2k-1)} 
  \end{align*}
for $1\leq j<k$, and for $k>1000$ respectively. To estimate $|J_1|$ we use the triangle inequality, extend the sum over $k$ to infinity and apply the Cauchy-Schwarz inequality in both variables, which yields
  \begin{align}\label{eq:J1}
      |J_1| &\le \frac{4}{\pi^2}\sum_{\substack{d < k 
    \\ 1\le j\le d } }
     \, \frac{\log\left(1+\tfrac{2(k-j)}{2j-1}\right)}{(k-j)(k+j-1) }|a_k a_j|\nonumber \\
    &\le \frac{4}{\pi^2} \left(\sum_{\substack{d < k 
    \\ 1\le j\le d } }
    \, \frac{\log^2\left(1+\tfrac{2(k-j)}{2j-1}\right)}{(k-j)^2(k+j-1)^2 }\right)^{\!\!\frac{1}{2}}\left(\sum_{k,j=1}^\infty|a_k|^2|a_j|^2\right)^{\!\!\frac{1}{2}}\nonumber\\
    &\le\frac{4}{\pi^2} \left(\sum_{j=1}^d
    \frac{1}{(j+d)^2}\,\sum_{k=d+1}^\infty \frac{\log^2\left(1+\tfrac{2(k-j)}{2j-1}\right)}{(k-j)^2 }\right)^{\!\!\frac{1}{2}}\|\mathbf{a}\|_2^2.
  \end{align}
Since $k-j\geq 1$, by applying estimates \eqref{eq:elem_2} and \eqref{eq:elem_1} with $n=d$ and $t={2}/{(2j-1)}$ 
 in \eqref{eq:J1}, we obtain
\begin{align*}
    |J_1|&\leq \frac{4}{\pi^2}\left(\sum_{j=1}^d
    \frac{4}{(j+d)^2(2j-1)^2}+\frac{\pi^2}{3}\sum_{j=1}^d
    \frac{2}{(j+d)^2(2j-1)}\right)^{\!\!\frac{1}{2}}\|\mathbf{a}\|_2^2\nonumber \\
    &\leq \frac{4}{\pi^2d}\left(\sum_{j=1}^\infty
    \frac{4}{(2j-1)^2}+\frac{\pi^2}{3}\sum_{j=1}^d
    \frac{2}{(2j-1)}\right)^{\!\!\frac{1}{2}}\|\mathbf{a}\|_2^2 \nonumber \\
    &\leq \frac{4}{\pi^2d}\left(\frac{\pi^2}{2}+\frac{\pi^2}{2}\log d\right)^{\!\!\frac{1}{2}}\|\mathbf{a}\|_2^2.
\end{align*}
In particular, when $d\geq 1000$,
\begin{align}
    |J_1|\leq \frac{2\sqrt{2}}{\pi}\left(1+\frac{1}{\log 1000}\right)^{\!\!\frac{1}{2}}\frac{\sqrt{\log d}}{d}\|\mathbf{a}\|_2^2 < \frac{\sqrt{\log d}}{d}\|\mathbf{a}\|^2. \label{eq:alpha}
\end{align}
To estimate $|J_2|$, we first separate the diagonal term $k=j$. On the other terms, we use the fact that $x\mapsto {\log(1+ax)}/{x}$ is decreasing when $x\in [0,\infty)$ for any $a>0$, and the Cauchy-Schwarz inequality, to obtain
  \begin{align}\label{eq:J2}
      |J_2| &\le \frac{2 \, \text{Si}(2001\pi)}{\pi (2d+1)}\cdot \sum_{k>d}|a_k|^2 + \frac{4}{\pi^2}\sum_{\substack{k> j \\ k,\,j>d}} |a_k| |a_j|\frac{\log\left(1+\tfrac{2}{2j-1}\right)}{k+j-1}
      \nonumber \\
      &\le 
      \frac{ \,\text{Si}(2001\pi)}{\pi\, d}\|\mathbf{a}\|_2^2 + \frac{8}{\pi^2}\left( \sum_{\substack{k> j \\ k,\,j>d}}\frac{1}{(k+j-1)^2(2j-1)^2}\right)^{\!\!\frac{1}{2}}\|\mathbf{a}\|_2^2 \nonumber
  \end{align}
  By applying \eqref{eq:elem_3} with $n=2j$ followed by \eqref{eq:elem_4} with $n=d+1$, one arrives at
\begin{equation}\label{eq:beta}
    |J_2| \leq \frac{ \,\text{Si}(2001\pi)}{\pi\, d}\|\mathbf{a}\|_2^2 + \frac{8}{\pi^2}\left(1+\frac{1}{1000}\right)^{\!\!\frac{1}{2}}\left( \sum_{j=d+1}^\infty\frac{1}{2j(2j-1)^2}\right)^{\!\!\frac{1}{2}}\|\mathbf{a}\|_2^2  < \frac{1}{d}\|\mathbf{a}\|^2.
\end{equation}
Since \eqref{eq:alpha} and \eqref{eq:beta} do not depend on $M$, applying these estimates on \eqref{eq:Jh}, sending $M\to\infty$, and using the continuity of $J$ concludes the proof. Finally, to obtain numerical bounds, we calculate the eigensystems numerically for $d\le 3010$, and find that $
			0.783002554179<\lambda_{3010}<0.783002554181$.
		We thereby obtain the numerical bounds in Theorem 8, and we highlight here our best, rigorous upper bound: 
		\[\mathcal{A}_1 < 1.277135042105.\]

\end{proof}


	
	\section{Some functions in $\mathcal{F}_1$} \label{Examples}
	
	\subsection{The lid function} In this subsection, we apply Sonin's method to construct a nice sequence of functions in $\mathcal{F}_1$. For any positive real numbers $B$ and $C$, consider the differential equation
	\begin{align} \label{18_7_17:38pm}
		y''+\dfrac{B}{x}\,y'+Cy=0. 
	\end{align} 
	Let $y=g$, $g:\R\to\R$ be a solution of the equation \eqref{18_7_17:38pm}. The \emph{lid} of $g^2$ is the function defined by
\begin{align}  \label{4_54pm}
	f(x)=(g(x))^2+\dfrac{(g'(x))^2}{C}.
\end{align}
	Note that $f(x)\geq 0$ for all $x\in\R$, and
	\begin{align*}
		f'(x)=-\dfrac{2B(g'(x))^2}{x\,C}.
	\end{align*} 
	This implies that $f$ is radially decreasing. Moreover, if we suppose that the solution $g$ has an analytic extension on $\C$ of exponential type at most $\pi$, and $g\in L^{2}(\R)$, we conclude that $f\in \mathcal{F}_1$. 
	
	Let us show some examples of lids. For $\alpha>0$, consider the Bessel function of the first kind of order $\alpha$, which is defined by 
	\begin{align*}  
		J_{\alpha}(z)=\displaystyle\sum_{\nu=0}^{\infty}\dfrac{(-1)^{\nu}(z/2)^{\alpha+2\nu}}{\nu!\,\Gamma(\nu+\alpha+1)}.
	\end{align*}
Let us remark some properties of the Bessel functions mentioned in \cite[Section 1.71]{G}.
It is known  (see \cite[Equation 1.71.3]{G}) that $J_\alpha$ satisfies the differential equation
\begin{equation} \label{4}
		y'' + x^{-1}y' + (1-\alpha^2x^{-2})y=0.
\end{equation} 
Now, define the function
$$
g_\alpha(z)=\dfrac{J_\alpha(\pi z)}{(\pi z)^\alpha}.
$$
A straightforward change of variables in \eqref{4} shows that $g_\alpha$ satisfies the differential equation
\begin{equation*} 
	y'' + \dfrac{2\alpha+1}{x}\,y' + \pi^2y=0.
\end{equation*} 
The function $g_\alpha$ is an even entire function of exponential type $\pi$. Moreover, using the decay of $J_\alpha$ (see \cite[Equations 1.71.10 and 1.71.11]{G} we see that $g_\alpha\in L^2(\R)$. Therefore, inserting $g_\alpha$ in \eqref{4_54pm} we actually construct the lid of $g_\alpha^2$, with $B=2\alpha+1$ and $C=\pi^2$. In the particular case $\alpha=1/2$ we known that 
$$
g_{1/2}(x)=\dfrac{\sin(\pi x)}{\pi x},
$$
and therefore 
\begin{align*}  
	f_{1/2}(x)=(g_{1/2}(x))^2+\dfrac{(g_{1/2}'(x))^2}{\pi^2}
\end{align*}
is the lid of $K(x)$. Straightforward calculations show that the Fourier transform of $f_{1/2}$ is
\begin{align} \label{21}
	\widehat{	f_{1/2}}(\xi)=\max\{1-|\xi|,0\}+\dfrac{1}{\pi^2}\widehat{(g'_{1/2})^2}(\xi).
\end{align}
Then the Fourier transform - convolution de Margan type law yields 
\begin{align*} 
	\widehat{(g_{1/2}')^2}(\xi) & = (\widehat{g_{1/2}'} \ast\widehat{g_{1/2}'})(\xi)=\left((2\pi i x\, \widehat{g_{1/2}})\ast(2\pi i x \,\widehat{g_{1/2}})\right)\!(\xi)= -4\pi^2\int_{-\infty}^\infty x\widehat{g_{1/2}}(x)(\xi-x)\widehat{g_{1/2}}(\xi-x)\,\dx,
\end{align*}
where we used the fact that $\widehat{g_{1/2}}(x)= \chi_{I}(x)$. This, together with \eqref{21}, implies
\begin{equation*}
	\widehat{f_{1/2}}\,(\xi) = \left\{
	\begin{array}{ll}
		\dfrac{2}{3}\,(1-|\xi|)^2(|\xi|+2),      & \mathrm{if\ } |\xi|\leq 1;\\
		0, & \mathrm{if\ } |\xi|> 1.
	\end{array}
	\right.
\end{equation*}
In particular, this example allows us to obtain the bound $\mathcal{A}_1\leq 1.333\ldots$. In fact, one can repeat the same argument for the function $f_\alpha$, for any $\alpha>0$. The Fourier transform of $f_\alpha$ can be computed using \cite[Equation 1.71.6]{G}. Finally, we minimize the ratio $\widehat{f_\alpha}(0)/f_\alpha(0)$ with respect to $\alpha$,  and obtain that it is attained for $\alpha_0=0.787\ldots$ and $\widehat{f_{\alpha_0}}(0)/f_{\alpha_0}(0)=1.284\ldots$. Hence $\mathcal{A}_1\leq 1.284\ldots$.
\subsection{Polynomial examples}\label{sec:proofNumeric} As mentioned in the introduction, we now transform the optimization problem \eqref{eq:A123} over $\mathcal{F}_2$ in  into another unrestricted, smooth optimization problem over $\R^{d+1}$, so that we may construct functions $h$ in a systematic way with standard numerical optimization methods. For this purpose, we make a couple of helpful observations. First, note that if $h\in\mathcal{F}_2$ then $h\in L^1(\R)$. In fact, by the Cauchy-Schwarz inequality, we have
\begin{align*}
    \int_1^\infty |h(x)|\ \d x &=
    \int_1^\infty |x\, h(x)|\cdot \frac{1}{x}\,  \d x \leq \sqrt{
    	\int_1^\infty x^2|h(x)|^2\,\d x}\cdot \sqrt{ \int_1^\infty \frac{1}{x^2}\,\d x}\,  <\infty.
\end{align*}
Therefore, $\widehat{h}$ is continuous in $\R$, and in particular $\widehat{h}(\pm1/2)=0$. Denoting $I=[-1/2,1/2]$ we have that $\supp \widehat{h}\subset I$. Therefore, by the Stone-Weierstrass theorem we may approximate $\widehat{h}$ uniformly by a polynomial times $\chi_I$.

With the previous observations in mind, we consider functions of the form 
\begin{equation}\label{eq:hPols}
\widehat{h}(x)= \left(
\frac{1}{4}-x^2
\right)g(x)\chi_I(x),
\end{equation}
where 
\[
g(x)=\sum_{i=0}^d a_ix^i \in \R[x]
\]
is a polynomial of degree $d$. Note that the factor $\left(\frac{1}{4}-x^2\right)$ means that $\widehat{h}\left(\pm 1/2\right)=0.$  Denoting $\mathbf{a}=(a_0,a_1,\ldots,a_d)\in\R^{d+1}$, the infimum in \eqref{eq:A123}, restricted to this class, becomes 
\begin{equation}\label{eq:A1d}
    \mathcal{A}_{1,d} :=	\min_{\mathbf{a}\in\R^{d+1}\setminus 0}\dfrac{2 \mathbf{a}\cdot N\mathbf{a}}{\mathbf{a}\cdot D\mathbf{a}},
\end{equation} 
where $N$, $D\in\R^{(d+1)\times(d+1)}$ are defined by 
\begin{equation*}
    N_{ij} = \int_{-\infty}^{\infty}|x|^2f_i(x)\overline{f_j(x)}\,\dx;
    \ \ \ 
   D_{ij}= \int_{-\infty}^{\infty}|x|f_i(x)\overline{f_j(x)}\,\dx;
    \ \ \ 
    f_i(x)=\left[\left(
\frac{1}{4}-y^2
\right)y^i\chi_{I}\right]^\wedge\!\!\!(-x).
\end{equation*}
For all $d\le 20$ and $0\le i\le d$, it is easy to see by direct computation of $f_i$ that $xf_i\in L^2(\R)$, so that $h=\mathbf{a}\cdot (f_0,\ldots,f_d)\in\mathcal{F}_2$ for all $\mathbf{a}\in\R^{d+1}$. The matrices $N$ and $D$ may be computed explicitly for a given $d$, and this is then a smooth optimization problem over $\R^{d+1}$. Solving it numerically for $d=2$, we find
\begin{equation}\label{eq:simpleH}
    \widehat{h_0}(x)=\left(\frac{1}{4}-x^2\right)\left(1-\frac{9}{5}x^2\right) \chi_I(x),
\end{equation}
which yields
\begin{equation}\label{eq:hPold2}
	h_0(x)=\frac{\left(108-25 \pi ^2 x^2\right) \sin (\pi  x)-\pi  x \left(11 \pi ^2 x^2+108\right) \cos (\pi  x)}{40 \pi ^5 x^5}.
\end{equation}
By direct computation in exact rational arithmetic, this gives
\begin{equation*}
	\mathcal{A}_1\le \frac{49484}{38745} = 1.27717...
\end{equation*}
This gives another proof of an upper bound for $\mathcal{A}_1$, which coincides up to four decimal digits with our best bounds. Moreover, using the representation \eqref{9_41pm} we obtain the function in $\mathcal{F}_1$
\begin{equation}\label{eq:f}
	\begin{split}
		f_0(x)= 
		&\frac{P(\pi x)+Q(\pi x)\sin (2 \pi  x)+R(\pi x)\cos(2\pi x)}{738 \pi ^8 x^8}, 
	\end{split}
\end{equation}
where
\begin{eqnarray*}
	P(x) =& \!\!\!\!242x^6+3001x^4+4176,\\ 
	Q(x)  = &\!\!\!\!-242 x^5-576 x^3-11664x,\\
	R(x) = &\!\!\!\!1463 x^4+7488 x^2-5832;
\end{eqnarray*}
see Figure \ref{im:f}.

\begin{figure}[ht] 
\centering
\includegraphics[width=3in]{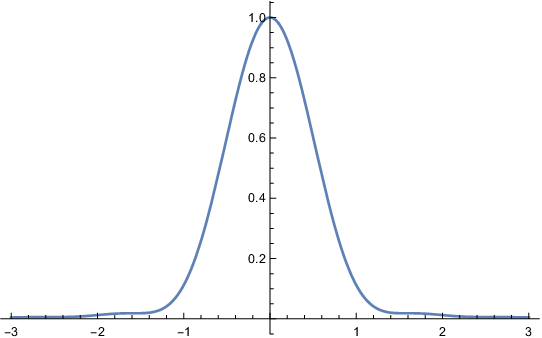} 
\caption{\label{im:f}The function $f_0(x)$ defined in equation \eqref{eq:f}.}
\end{figure}

Additionally, we solve \eqref{eq:A1d} for all $d\le 20$ and observe that, as the degree $d$ increases, the sequence 
$\mathcal{A}_{1,d}$ decreases very slowly, showing only a tiny improvement from $1.27717...$ only in the fifth decimal digit. More precisely, we recover the bound $\mathcal{A}_1\le1.27713505...$ with those much more detailed calculations performed with large degree $d$ of the polynomials $g$. Below, we will show some tables with the results of these computations (see Table \ref{tb:a}), and compare the results with those of our $L^2$-approach from Section \ref{sec:L2}. In Figure \ref{im:hHatPol} and Figure \ref{im:hPol}, we plot the functions $4\widehat h_0$ and $\frac{600}{91} h_0$, respectively, where, since $h_0(0)=\frac{91}{600}$ and $\widehat h_0(0)=\frac{1}{4}$, we renormalized the plots accordingly.

\begin{figure}[ht] 
	\centering
	\begin{minipage}{.5\textwidth}
		\centering
		\includegraphics[width=3in]{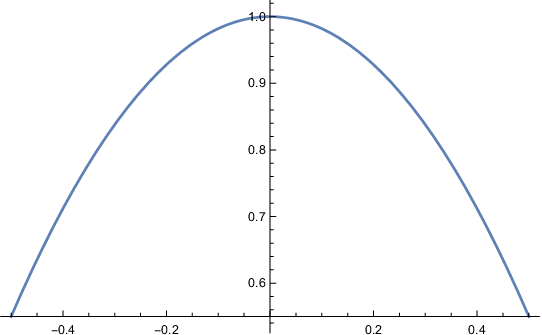} 
		\caption{\label{im:hHatPol}The function  $\widehat{h_0}(x)/\widehat{h_0}(0)$ defined in equation \eqref{eq:simpleH}.}  
	\end{minipage}%
	\begin{minipage}{.5\textwidth}
		\centering
		\includegraphics[width=.9\linewidth]{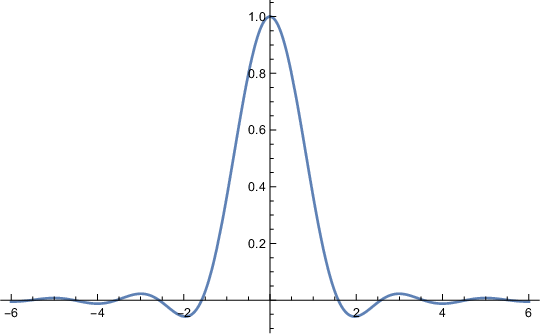} 
		\caption{\label{im:hPol}The function  $h_0(x)/h_0(0)$ defined in equation \eqref{eq:hPold2}.}  
	\end{minipage}
\end{figure}

In Table \ref{tb:a}, we compare the speed of convergence of the two numerical approaches we have presented. The first approach is described above in the present section 
, with functions $h$ defined as in \eqref{eq:hPols}, via polynomials $g$ of some degree $d$. The second approach is the $L^2-$approach described in Section \ref{sec:L2}, with functions $h$ defined by \eqref{eq:hL2}. In both cases, the parameter $d$ is the number of degrees of freedom in the construction of the function $h$. In both cases, the upper bounds for $\mathcal{A}_1$ appear to quickly converge to the first few decimal digits, yet we observe that in the polynomial approach, the bound for $\mathcal{A}_1$ seems to converge much faster to more decimal digits with small values of $d$. Together, all of this gives evidence to the conjecture that the sharp value of $\mathcal{A}_1$, up to its first 9 significant digits, is 
\begin{equation}\label{eq:conj}
 \mathcal{A}_1 =1.277135042...
\end{equation}

Furthermore, the normalized plot of the function $h$ we constructed by using \eqref{eq:hL2} with $d=1000$ is almost indistinguishable from the plot of $h_0$ shown in Figure \ref{im:hPol}. Since the explicit function $h_0$ defined in \eqref{eq:hPold2} already agrees with our conjecture \eqref{eq:conj} to four significant digits, we might expect it to behave close to an extremizer for $\mathcal{A}_1$. Indeed, in Table \ref{tb:zeros}, we compare the first 10 zeros of the functions $h_0$ in \eqref{eq:hPold2} and $h$ in \eqref{eq:hL2} (the latter with $d=1000$). Note that there is a good agreement up to the second decimal digit. We remark that the latter do not change with respect to the values with $d=500$, up to the digits shown, except for a minor change in the last digit of $x_{10}=10.5240...$ (for $d=500$).

\begin{table}[ht]
	\begin{center}
		\begin{tabular}{|c|c||c|c|}
			\hline
			$d$ & $\mathcal{A}_1$ (polynomials) & $d$ &  $\mathcal{A}_1$ ($L^2$)  \\
			\hline \hline
			2 & 1.277171240 &	10 & 1.2771993500 \\ \hline
			4 &  1.277148060 &	50 & 1.2771360175  \\ \hline
			6 & 1.277137688&	100 & 1.2771351946  \\ \hline
			8 & 1.277135865 &	150 & 1.2771350931  \\ \hline
			10 &  1.277135348&	200 & 1.2771350654  \\ \hline
			12 &  1.277135173&	300 & 1.2771350498  \\ \hline
			14 &  1.277135104 &	
    500 & 1.2771350440
   \\ \hline
			16 &1.277135074 &	1000 & 1.2771350424  \\ \hline
			20 & 1.277135052 &	3010 & 1.2771350422  \\ \hline
		\end{tabular}
		\vspace{0.2cm}
		\caption{Comparison of the numerical bounds for $\mathcal{A}_1$ in the polynomial construction of Section \ref{sec:proofNumeric} and in the $L^2-$construction of Section \ref{sec:L2}, as the corresponding parameter $d$ grows.}
		\label{tb:a}
	\end{center}
\end{table}

\begin{table}[ht]
	\begin{center}
		\begin{tabular}{|c||c|c|c|c|c|c|c|c|c|c|}
		\hline
		. &$x_1$ &$x_2$&$x_3$&$x_4$&$x_5$&$x_6$&$x_7$&$x_8$&$x_9$&$x_{10}$  \\ \hline \hline
 Pol & 1.5839 & 2.5715 & 3.5573 & 4.5470 & 5.5395 & 6.5340 & 7.5297 & 8.5264 & 9.5238 & 10.5220\\ \hline 
  $L^2$ & 1.5866 & 2.5648 & 3.5525 & 4.5444 & 5.5387 & 6.5344 & 7.5311 & 8.5284 & 9.5261 & 10.5243\\ \hline 
		\end{tabular}
		\vspace{0.2cm}
		\caption{First positive zeros of the function $h_0$ via polynomials of degree 2 given in \eqref{eq:hPold2}, and via the $L^2-$approach as in \eqref{eq:hL2} with $d=1000.$}
		\label{tb:zeros}
	\end{center}
\end{table}

	\section{Proof of Theorem \ref{2_57pm} } \label{one-deltaproblemwithderivatives}
Let $g\in PW^2$. By Paley-Wiener's theorem, $\widehat{g}$ has compact support in $[-\hh,\hh]$, and using Plancherel's theorem, we obtain
	\begin{align} \label{1_58pm}
		\begin{split} 
		\int_{-\infty}^{\infty}\displaystyle\sum_{n=0}^N\dfrac{a_n}{\pi^{2n}}\big|g^{(n)}(x)\big|^2\,\dx & = \int_{-\frac{1}{2}}^{\frac{1}{2}}\Bigg(\displaystyle\sum_{n=0}^{N}a_n(4t^2)^{n}\Bigg)\big|\widehat{g}(t)\big|^2\,\dt = \int_{-\frac{1}{2}}^{\frac{1}{2}}\mathcal{P}(4t^2)\big|\widehat{g}(t)\big|^2\,\dt.
		\end{split} 
	\end{align}
Since $g\in PW^2$, then
\begin{align}\label{2_58pm}
	g(z)=\int_{-\frac{1}{2}}^{\frac{1}{2}}\widehat{g}(t)\,e^{2\pi izt}\,\dt.
\end{align} 
Then the fact that $g(0)=1$, the positivity of $\mathcal{P}(x)$, the Cauchy-Schwarz inequality and \eqref{1_58pm} yield
\begin{align} \label{2_15pm}
	\begin{split} 
	1=\left|\int_{-\frac{1}{2}}^{\frac{1}{2}}\widehat{g}(t)\,\dt\right|^2 & = \left|\int_{-\frac{1}{2}}^{\frac{1}{2}}\sqrt{\mathcal{P}(4t^2)}\,\widehat{g}(t)\cdot\dfrac{1}{\sqrt{\mathcal{P}(4t^2)}}\,\dt\right|^2  \leq  \left(\int_{-\frac{1}{2}}^{\frac{1}{2}}\mathcal{P}(4t^2)\big|\widehat{g}(t)\big|^2\,\dt\right)\left(\int_{-\frac{1}{2}}^{\frac{1}{2}}\dfrac{1}{\mathcal{P}(4t^2)}\,\dt\right),
\end{split}
\end{align}
which implies \eqref{10_02pm}. Note that equality in \eqref{2_15pm} holds if and only if there is $\lambda\in\C$, such that 
$$
\widehat{g}(t)=\dfrac{\lambda}{\mathcal{P}(4t^2)}
$$
almost everywhere in $[-\hh,\hh]$. Hence, from \eqref{2_58pm} we conclude that
\begin{align*}
	g(z)=\lambda\int_{-\frac{1}{2}}^{\frac{1}{2}}\dfrac{e^{2\pi izt}}{\mathcal{P}(4t^2)}\,\dt = \lambda\int_{0}^{1}\dfrac{\cos{(\pi zt)}}{\mathcal{P}(t^2)}\,\dt.
\end{align*} 
Since $g(0)=1$, then the extremal function is unique and it is is given by \eqref{10_44pm}.

\begin{remark}
Since $\mathcal{P}(x)>0$ for all $x\in [0,1]$, the expression in \eqref{1_58pm} is nonnegative. Thus we obtain a norm in $PW^2$, defined by
$$
\|g\|_{\mathcal{P}}=\left(\int_{-\infty}^{\infty}\displaystyle\sum_{n=0}^N \frac{a_n}{\pi^{2n}} \big| g^{(n)}(x)\big|^2\dx \right)^{\frac{1}{2}}\!\!\!\!,
$$
which can be viewed  as a Sobolev-type norm.
\end{remark}

\section*{Appendix}
Here, we record the following elementary estimates, which are useful in the proof of Theorem \ref{thm:lower_bounds}. Given $0<t<\infty$ and $n\geq 1000$, one has
	\begin{align}
		\label{eq:elem_3}&\sum_{k=n}^\infty\frac{1}{k^2} <  \frac{1}{n^2}+\frac{1}{n}\leq \left(1+\frac{1}{1000}\right)\frac{1}{n} \\
		\label{eq:elem_2} &\sum_{k=1}^n\frac{1}{2k-1}< 1+\log2+\frac{\log n}{2}\leq \frac{3\log n}{4} \\
		\label{eq:elem_1}       &\sum_{k=1}^\infty\frac{\log^2(1+tk)}{k^2} < t^2+\int_0^\infty \frac{\log^2(1+tx)}{x^2}\dx \leq t^2+\frac{\pi^2}{3}t \\                              \label{eq:elem_4} &\sum_{k=n}^\infty\frac{1}{(2k-1)^3} <\frac{1}{(2n-1)^3}+\frac{1}{4(2n-1)^2}\leq \left(\frac{1}{4}+\frac{1}{1999}\right)\frac{1}{(2n-1)^2}	\end{align}

 \section*{Acknowledgements}
 Part of this paper was written while A.C. was a Visiting Researcher in Department of Mathematics at the State University of S\~ao Paulo UNESP. He is grateful for their kind hospitality. 
 We are thankful to Emanuel Carneiro for helpful discussions related to the material of this paper. 

\end{document}